      \theoremstyle{plain}
      \newtheorem{theorem}{Theorem}[section]
      \newtheorem{lemma}[theorem]{Lemma}
      \newtheorem{proposition}[theorem]{Proposition}
      \newtheorem{remark}[theorem]{Remark}
      \newtheorem{definition}[theorem]{Definition}        
\numberwithin{equation}{section}
      \def\@setcopyright{}
      \def\serieslogo@{}
\def\A{\EuScript{A}} 
\def\B{\EuScript{B}}
\def\V{\mathcal{V}}
\def\n{\mathcal N}
\def\M{\mathcal M}
\def\Z{\mathbb Z}
\def\N{\mathbb N}
\def\dist{\text{dist}}
\def\diam{\text{diam}}
\def\Id{\text{Id}}
\def\e{\epsilon}
\def\b{\beta}
\def\va{\varphi}
\def\QED{\hfill\hfill{\square}}
\begin{document}

\date{\today}
\author{Boris Kalinin and Victoria Sadovskaya$^{\ast}$}

\address{Department of Mathematics, The Pennsylvania State University, 
University Park, PA 16802, USA.}
\email{kalinin@psu.edu, sadovskaya@psu.edu}

\title[Boundedness, compactness, and invariant norms for  Banach cocycles]
{Boundedness, compactness, and invariant norms \\ for  Banach cocycles over hyperbolic systems} 

\thanks{{\it Mathematical subject classification:}\,  37D20, 37H05}
\thanks{{\it Keywords:}\, Cocycle, bounded operator, periodic orbit, hyperbolic system, symbolic system}
\thanks{$^{\ast}$ Supported in part by NSF grant DMS-1301693}


\begin{abstract} 

We consider group-valued cocycles over dynamical systems
with hyperbolic behavior. The base system is either  a hyperbolic diffeomorphism 
or a mixing subshift of finite type.
The cocycle $\A$ takes values in the group of invertible bounded linear 
operators on a Banach space and is H\"older continuous.
We consider the periodic data of $\A$, i.e. the set of its return values along the 
periodic orbits in the base.
We show that if the periodic data of $\A$ is uniformly quasiconformal
or bounded or  contained  in a compact set, then so is the cocycle.
Moreover, in the latter case the cocycle is isometric with respect to a 
H\"older continuous family of norms. We also obtain a general result 
on existence of a measurable family of norms invariant under a cocycle.

\end{abstract}

\maketitle 

\section{Introduction and statement of the results}

Group-valued cocycles appear naturally and play an important role in dynamics. 
In particular, cocycles over hyperbolic systems  have been 
extensively studied starting with the work of A. Liv\v{s}ic \cite{Liv1,Liv2}. 
One of the main problems in this area is to obtain properties of the cocycle 
from its values at the periodic orbits in the base, which are abundant for 
hyperbolic systems. The study encompassed various types of groups,
from  abelian to compact non-abelian and more general 
non-abelian, see  \cite{NT95,PP,Pa,Sch,PW,LW,K11,KS10,S15,G,KS16} 
and a  survey in \cite{KtN}. Cocycles with values in the group of invertible linear operators on a 
vector space $V$ are the prime examples in the last class. The case of  finite dimensional $V$ 
has been well studied, with various applications including derivative cocycles 
of smooth dynamical systems and random matrices. The infinite dimensional case is more 
difficult and is less developed so far. The simplest examples are given by random and Markov sequences of operators. In our setting they correspond to locally constant cocycles over 
subshifts of finite type. Similarly to finite dimensional case, the derivative of a smooth 
infinite dimensional system gives a natural example of an operator valued cocycle. 
We refer to monograph \cite{LL} for an overview of results in this area and to  \cite{GK,M} 
for some of the recent developments.

In this paper we consider cocycles of invertible bounded operators on a Banach space $V$
over dynamical systems with hyperbolic behavior.  The space
$L(V)$  of bounded linear operators on $V$  is a Banach space equipped 
with the operator norm $\|A\|=\sup \,\{ \|Av\| : \,v\in V , \;\|v\| \le 1\}.$ 
The open set $GL(V)$  of invertible elements in  $L(V)$ is a topological group  
and a complete metric space with respect  to the metric
$$
d (A, B) = \| A  - B \|  + \| A^{-1}  - B^{-1} \|.
$$

\begin{definition} Let $f$ be a homeomorphism of a metric space $X$
and let $A$ be a function from $X$ to $(GL(V),d)$. 
The {\em Banach  cocycle over $f$ generated by }$A$ 
is the map $\A:\,X \times \Z \,\to G$ defined  by $\,\A(x,0)=\Id\,$ and for $n\in \N$
 $$
\A(x,n)=\A_x^n = A(f^{n-1} x)\circ \cdots \circ A(x) \quad\text{and}\quad\,
\A(x,-n)=\A_x^{-n}= (\A_{f^{-n} x}^n)^{-1} .
$$
\end{definition}

\noindent Clearly, $\A$ satisfies the {\em cocycle equation}\,
$\A^{n+k}_x= \A^n_{f^k x} \circ \A^k_x$.
\vskip.05cm
Cocycles can be considered in any regularity, but H\"older continuity is the most natural 
in our setting. On the one hand continuity of the cocycle is not sufficient for development 
of a meaningful theory even for scalar cocycles over hyperbolic systems. On the other hand,
symbolic systems  have a natural H\"older structure but lack a smooth one. Moreover, even 
for smooth hyperbolic systems higher regularity is rare for many usual examples of cocycles, 
such as restrictions of the differential to the stable and unstable subbundles. 
We say that a cocycle 
$\A$ is  {\em $\beta$-H\"older} if its generator $A$  is H\"older continuous with exponent 
$0<\beta \le 1$, i.e. there exists $c>0$ such that
$$
 d (A(x), A(y)) \le  \, c\, \dist (x,y)^\beta \quad\text{for all  }x,y \in X. 
$$

For a cocycle $\A$,  we consider the periodic data set $\A_P$  and the set of all values $\A_X$,
$$
\A_P=\{ \A_p^k:\; p=f^kp,\; p\in X,\; k\in \N \} \quad\text{and}\quad
\A_X=\{ \A_x^n:\;  x\in X,\; n\in \Z \}.
$$
Our main result is that uniform quasiconformality, uniform boundedness, 
and pre-compactness of the cocycle can be detected from its periodic data. 
 Moreover, pre-compactness implies that the cocycle is isometric with respect 
 to a H\"older continuous family of norms.

\begin{definition}
The {\em quasiconformal distortion} of a cocycle $\A$ is the function
 $$
Q_\A(x,n)= \| \A_x^n\| \cdot \| (\A_x^n)^{-1}\|, 
 \quad x\in X \text{ and }n\in \Z.
 $$ 
  \end{definition}

\begin{theorem} \label{bounded} 
Let $(X,f)$ be  either a transitive Anosov diffeomorphism of a compact 
connected manifold or a topologically mixing  diffeomorphism of a locally maximal 
hyperbolic set or a mixing subshift of finite type (see Section  2 for definitions). 
Let $\A$ be a 
H\"older continuous Banach 
cocycle over $f$. 
\vskip.1cm
\begin{itemize}
\item[{\bf (i)}] If there exists a constant $C_{per}$ such that 
$ \,Q_\A(p,k)   \le C_{per}\,$ whenever $f^k p=p,$  
 then  $\A$ is {\em uniformly quasiconformal,} 
i.e. there exist a constant $C$  such that 
\vskip.1cm  
\hskip2cm $Q_\A(x,n)\le C\;$ for all $x\in X$ and $n\in Z$.
\vskip.1cm 

\item[{\bf (ii)}] If the  set $\A_P$ is bounded in $(GL(V),d)$, 
then so is the set $\A_X$.
\vskip.1cm
 
\item[{\bf (iii)}]   If the  set $\A_P$ has compact closure in $(GL(V),d)$, 
then so does the set  $\,\A_X$.
\vskip.1cm

\item[{\bf (iv)}]   If the  set $\A_X$ has compact closure in $(GL(V),d)$
then  there exists a H\"older continuous family of norms $\|.\|_x$ on $V$ such that 
\vskip.1cm
\hskip1.2cm $\A_x:(V, \|.\|_x)\to (V, \|.\|_{fx}) \;$ is an isometry  for each $x\in X$.
\end{itemize}
\end{theorem}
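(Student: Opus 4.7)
\medskip
\noindent\textbf{Proof plan.}
All four parts rely on the interplay between H\"older continuity of $\A$ and the abundance of periodic orbits of $f$. The technical backbone is a \emph{periodic approximation lemma}: there exist $C, \delta > 0$ and $\theta \in (0,1)$, depending only on $f$ and the H\"older data of $\A$, such that whenever $d(x, f^n x) < \delta$, the periodic point $p$ with $f^n p = p$ given by the Anosov closing lemma satisfies $d(f^j p, f^j x) \le C \theta^{\min(j, n-j)} d(x, f^n x)$ for $0 \le j \le n$, and moreover $\A_p^n = U \A_x^n W$ for some $U, W \in GL(V)$ with $d(U, \Id)$ and $d(W, \Id)$ bounded by a constant independent of $x$ and $n$. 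To prove it one telescopes $\A_p^n (\A_x^n)^{-1}$ along the orbit: H\"older continuity of $A$ makes each per-step discrepancy $A(f^j p) A(f^j x)^{-1}$ an $O(\theta^{j\beta})$ perturbation of $\Id$, and conjugation by the uniformly bounded partial products contributes a geometrically summable error.

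Parts (i)--(iii) then follow directly. The specification property of mixing hyperbolic systems ensures that, for every $x \in X$ and $n \in \N$, one can find $m \le m_0$ (with $m_0$ depending only on $\delta$ and $f$) and a periodic point $p$ of period $n+m$ whose orbit $\delta$-shadows $x, fx, \ldots, f^{n+m} x$; the lemma yields $\A_p^{n+m} = U \A_x^{n+m} W$ with uniform control on $U, W$. The multiplicative bound gives $Q_\A(x, n+m) \le C' Q_\A(p, n+m)$, $\|\A_x^{n+m}\| \le C' \|\A_p^{n+m}\|$, and similarly for inverses, while $\A_x^n = (\A_{f^n x}^m)^{-1} \A_x^{n+m}$ absorbs the $m$ extra steps into a uniformly bounded factor. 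Parts (i) and (ii) are then immediate. For (iii), the identity $\A_x^n = (\A_{f^n x}^m)^{-1} U^{-1} \A_p^{n+m} W^{-1}$ expresses every $\A_x^n$ as a product of elements from precompact sets, so $\A_X$ is precompact.

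For part (iv), assume $\A_X$ has compact closure, yielding uniform bounds $\|\A_x^n\|, \|(\A_x^n)^{-1}\| \le M$. Define
\[
\|v\|_x := \sup_{n \in \Z} \|\A_x^n v\|_V.
\]
Then $\|v\|_V \le \|v\|_x \le M \|v\|_V$, so $\|\cdot\|_x$ is a norm equivalent to $\|\cdot\|_V$ with constants independent of $x$, and the index shift $n \mapsto n+1$ gives $\|\A_x v\|_{fx} = \|v\|_x$, i.e.\ the isometry property. The remaining task, H\"older continuity of $x \mapsto \|\cdot\|_x$, is the main obstacle: direct termwise comparison fails because $\sum_k d(f^k x, f^k y)^\beta$ diverges along generic pairs. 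My approach is to construct stable and unstable holonomies of $\A$: precompactness of $\A_X$ makes $(\A_y^n)^{-1} \A_x^n$ a Cauchy sequence at geometric rate $\theta^{n\beta}$ for $y \in W^s_{loc}(x)$, yielding a bounded operator $H^s_{x,y}$, H\"older continuous in $(x,y)$ and intertwining the cocycle via $\A_y^n = H^s_{f^n x, f^n y} \A_x^n (H^s_{x,y})^{-1}$; $H^u$ is constructed dually. Along stable pairs $H^s$ controls the supremum over forward $n$, along unstable pairs $H^u$ controls the supremum over backward $n$, and the local product structure combines these into the desired H\"older estimate $|\|v\|_x - \|v\|_y| \le C d(x,y)^\alpha \|v\|_V$, with some $\alpha \le \beta$.

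The hardest step is precisely this combination: the supremum defining $\|\cdot\|_x$ runs over all $n \in \Z$, while each holonomy only efficiently handles one direction of iterates. Bridging the two via the local product structure, and carefully tracking the uniform operator-norm bounds on the holonomies as the forward and backward contributions are reassembled across the split, is the delicate piece of the argument, and the reason one typically accepts an exponent $\alpha < \beta$ in the final H\"older estimate.
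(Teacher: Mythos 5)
Your overall skeleton (close by periodic orbits, compare $\A^n_x$ with $\A^n_p$, deduce (i)--(iii), then build an invariant family of norms) parallels the paper, but the proposal has genuine gaps at the two places where the real work lies. First, the proof you sketch for the ``periodic approximation lemma'' is circular: when you telescope $\A_p^n(\A_x^n)^{-1}$, each per-step H\"older error gets conjugated by partial products of the cocycle, and you declare these ``uniformly bounded'' --- but that boundedness (or at least sub-exponential growth of $Q_\A(x,n)$) is precisely the conclusion of (i)/(ii), not a consequence of H\"older continuity; without it the conjugated errors need not be summable even though the orbits are exponentially close. The paper's essential first step is to obtain fiber bunching from the periodic hypothesis via the approximation result of \cite{KS16} (Proposition \ref{norm}: $Q_\A(p,k)\le C_{per}$ at periodic points forces $Q_\A(x,n)\le C'_\e e^{\e|n|}$ for every $\e>0$), and only then does the uniform-in-$n$ estimate $\|(\A^n_y)^{-1}\A^n_x-\Id\|\le c\,\dist(x,y)^\beta$ along stable/unstable leaves (Proposition \ref{close to Id}) become available; your lemma is essentially that estimate with the key ingredient missing. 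A second, separate gap is in (iii): even granting your lemma, knowing only that $d(U,\Id)$ and $d(W,\Id)$ are \emph{bounded} does not give precompactness of $\A_X$, since bounded subsets of $GL(V)$ are not precompact when $\dim V=\infty$; one needs the correctors to be $O(\delta^\beta)$-close to $\Id$ and an explicit finite $\e$-net argument (the paper builds one from an $\e'$-net of $\A_P$ together with the finitely many $\A^j_z$, $|j|\le m$, along a dense orbit). The paper stresses exactly this point: compactness does not follow formally from boundedness.

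For (iv), the family $\|v\|_x=\sup_{n\in\Z}\|\A^n_x v\|$ is indeed invariant and uniformly equivalent to $\|\cdot\|$, but your plan to prove its H\"older continuity directly via stable/unstable holonomies and the product structure cannot work, because this family is in general not even continuous: for a coboundary $\A_x=C(fx)\circ C(x)^{-1}$ with H\"older $C$ one has $\|v\|_p=\|v\|$ at a fixed point $p$, while at nearby points $x$ with dense orbit $\|v\|_x=\sup_{z\in X}\|C(z)C(x)^{-1}v\|$, which does not converge to $\|v\|$ as $x\to p$. The step you flag as ``the delicate piece'' (controlling the backward supremum along a stable pair) is not merely delicate --- it fails for this family, which is why the paper only proves Borel measurability of it (Proposition \ref{measurable}, itself requiring care since the space of norms is non-separable and compactness of $\bar N_\A$ is needed to get convergence $\va^m_x\to\va_x$ in the metric on norms), and then obtains the H\"older family only after modifying on a null set: invariance gives $\dist(\va_x,\va_z)\le \dist(\va_{x_n},\va_{z_n})+K^{10}c\,\dist(x,z)^\beta$ along stable leaves, Lusin's theorem and the Birkhoff ergodic theorem for the measure of maximal entropy provide simultaneous returns to a set of uniform continuity killing the first term, absolute continuity of holonomies plus local product structure handle general pairs in a full-measure invariant set, and the final family is the extension by uniform continuity from that dense set (with exponent $\beta$, not a degraded $\alpha<\beta$). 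So both the entry point (fiber bunching from periodic data) and the exit point (continuity of the invariant norms) require ideas your proposal does not supply.
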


We note that the closures in (iii) are not the same in general. For example, 
 if  $\A$ is a {\em coboundary}, i.e. is generated by
$A(x)=C(fx)\circ C(x)^{-1}$ for a function $C:X\to GL(V)$, then $\A_P=\{\Id\}$
while $\,\A_X$ is usually not. The question whether $\A_P=\{\Id\}$ characterizes 
coboundaries has been studied over several decades and answered positively   
for various groups in  \cite{Liv2, NT95,PW, K11, G}.

\begin{remark}
We can view the cocycle $\A$ as an automorphism of the trivial vector bundle 
$\V= X\times V$ which covers $f$ in the base and has fiber maps $\A_x:\V_x \to \V_{fx}$.
Theorem \ref{bounded}  holds in the more general setting where  $X\times V$ is replaced
by a H\"older continuous vector bundle $\V$ over $X$ with fiber $V$ and the cocycle 
$\A$ is replaced by an automorphism $\mathcal F: \V \to \V$ covering $f$. 
This setting is described in detail in Section 2.2 of \,\cite{KS13} and our proofs 
work without any significant modifications.
\end{remark}

Theorem \ref{bounded} extends results for finite dimensional $V$ in \cite{KS10,LW,K11}. 
The infinite dimensional case is substantially different. The initial step 
of obtaining fiber-bunching of the cocycle from its periodic data relies on 
our new approximation results \cite{KS16}. 
The finite dimensional boundedness result is extended in two directions: 
boundedness and pre-compactness, as the latter does not follow automatically.
 Existence of a continuous family of norms requires a new approach. 
 Indeed, on a finite dimensional  space the set of Euclidean norms
has a structure of a symmetric space of nonpositive curvature which was used
in the arguments, 
but in infinite dimensional case there is no analogous metric structure. 
We consider a natural distance on the set of norms 
but the resulting space is not separable so we  work  with a small subset.
The following general result yields a measurable invariant family 
of norms and then we show its continuity.

\begin{proposition} \label{measurable}
Let $f$ be a homeomorphism of a metric space $X$
and let $\A$ be a continuous Banach cocycle over $f$.
If the set of values  $\A_X$ has compact closure in $GL(V)$,
then  there exists a bounded Borel measurable family of norms $\|.\|_x$ on $V$ such that 
$\,\A_x:(V, \|.\|_x)\to (V, \|.\|_{fx}) \,$ is an isometry  for each $x\in X$.
\end{proposition}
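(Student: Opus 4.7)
The plan is to define, for each $x \in X$, a norm on $V$ by taking the supremum of the Banach norm along the full $\A$-orbit of each vector, and then to verify the three required properties: uniform equivalence with $\|\cdot\|$, $\A$-invariance, and Borel measurability in $x$.

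First I would use pre-compactness of $\A_X$ in $(GL(V),d)$ to extract the uniform bound $\|\A_x^n\|\le M$ and $\|(\A_x^n)^{-1}\|\le M$ valid for all $x\in X$ and all $n\in \Z$; this is the only place where the hypothesis is used. With this in hand, set
$$
\|v\|_x \;:=\; \sup_{n\in \Z}\,\|\A_x^n v\|.
$$
Absolute homogeneity and the triangle inequality are inherited from $\|\cdot\|$. Taking $n=0$ gives $\|v\|_x\ge \|v\|$, while the uniform bound gives $\|v\|_x\le M\,\|v\|$, so $\|\cdot\|_x$ is a genuine norm on $V$ and the family is uniformly equivalent to $\|\cdot\|$, hence bounded in the sense of the statement. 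The invariance is immediate from the cocycle identity $\A_{fx}^n\circ \A_x=\A_x^{n+1}$ by an index shift:
$$
\|\A_x v\|_{fx} \;=\; \sup_{n\in\Z}\|\A_{fx}^n\A_x v\| \;=\; \sup_{n\in\Z}\|\A_x^{n+1} v\| \;=\; \|v\|_x.
$$

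For Borel measurability, continuity of $A\colon X\to (GL(V),d)$ together with the uniform operator bound implies that, for each fixed $n$, the map $x\mapsto \A_x^n$ is continuous in the operator norm, so for each $v\in V$ the function $x\mapsto \|\A_x^n v\|$ is continuous on $X$. The function $x\mapsto \|v\|_x$ is then the supremum over the countable set $\Z$ of continuous functions, hence lower semicontinuous and in particular Borel.

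I do not expect any serious obstacle here. The construction is canonical and the hypothesis is tailored precisely to make the orbit supremum finite. The real subtleties arise only afterwards, when one wishes to upgrade this Borel family to a H\"older continuous family in $x$, which is the content of Theorem~\ref{bounded}(iv); there the rigid orbit-sup norm appears too delicate to vary continuously with $x$, and one has to work with the softer distance on the set of equivalent norms hinted at in the discussion preceding this proposition.
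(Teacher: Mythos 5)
Your construction is exactly the paper's: the invariant family is $\va_x(v)=\sup_{n\in\Z}\|\A_x^n v\|$, and your verification of the norm axioms, the two-sided bound $\|v\|\le\va_x(v)\le M\|v\|$, and the cocycle invariance are all fine. The gap is in the measurability step, and it is not a technicality --- it is precisely where the hypothesis of compact closure (as opposed to mere boundedness, which is all you use) enters. What you prove is that for each \emph{fixed} vector $v$ the scalar function $x\mapsto \va_x(v)$ is lower semicontinuous, hence Borel. But the proposition is applied in the proof of Theorem \ref{bounded}(iv) by invoking Lusin's theorem for the map $x\mapsto\va_x$ viewed as a map into the space of norms $N_K$ with the metric $\dist$ defined by comparison of unit balls (equivalently the sup-distance $\dist'$ over the unit ball of $V$); this requires Borel measurability of $x\mapsto\va_x$ as a map into a separable metric space. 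Since neither $V$ nor $N_K$ is assumed separable, vector-wise measurability does not upgrade to this stronger measurability (no Pettis-type argument is available without a separable range), and the monotone pointwise convergence $\va^m_x(v)\uparrow\va_x(v)$, where $\va^m_x=\max\{(\A_x^n)^*\va_0:\,|n|\le m\}$, need not be uniform over the unit ball for a merely bounded cocycle, so you cannot conclude that $\va_x$ is a $\dist'$-limit of the continuous maps $x\mapsto\va^m_x$.

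The fact that your argument uses only boundedness of $\A_X$ is the tell-tale sign: the paper's proof uses compactness exactly to close this gap. It shows that the pullbacks $(\A_x^n)^*\va_0$ all lie in a compact subset $N_\A$ of $(N_K,\dist)$ (the continuous image of the compact closure of $\A_X$ under $A\mapsto A^*\va_0$), that the set of finite maxima of such norms is still totally bounded, and then uses compactness of its closure $\bar N_\A$ to upgrade the monotone pointwise convergence $\va^m_x\to\va_x$ to convergence in $\dist$. Consequently $\va$ is a pointwise limit, in the metric of a compact (hence separable) space of norms, of the continuous maps $x\mapsto\va^m_x$, and is therefore Borel in the sense actually needed; the paper explicitly remarks that compactness of $\bar N_\A$ is crucial at this point. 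To repair your proof you would have to add exactly this layer: show that your orbit-sup norm is the $\dist$-limit of the finite maxima and that all the norms involved lie in one compact subset of the norm space.
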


Theorem \ref{bounded} yields cocycles with a ``small" set of values $\A_X$, which 
are relatively well understood. 
For example, a cocycle satisfying the conclusion (ii) of the theorem has
 {\em bounded distortion} in the sense of \cite{Sch}, i.e. there exists a 
 constant $c$ such that 
 $$
 d(AB_1,AB_2) \le c \,d(B_1,B_2)\quad\text{and}\quad d(B_1A,B_2A) \le c \,d(B_1,B_2)
 $$ 
 for all $A \in \A_X$ and all $B_1,B_2\in GL(V)$. 
 Some definitive results on cohomology of such cocycles were obtained by K. Schmidt
 in \cite{Sch}. These results can be extended to cocycles satisfying the conclusion of (i)
 by considering the quotient by the group of scalar operators.


\section{Systems in the base} \label{settings}

\noindent{\bf Transitive Anosov diffeomorphisms.} 
Let $X$ be a compact connected  manifold.
A diffeomorphism  $f$ of  $X$
 is called {\it Anosov}\, if there exist a splitting 
of the tangent bundle $TX$ into a direct sum of two $Df$-invariant 
continuous subbundles $E^s$ and $E^u$,  a Riemannian 
metric on $X$, and  continuous  
functions $\nu$ and $\hat\nu$  such that 
\begin{equation}\label{Anosov def}
\|Df_x(v^s)\| < \nu(x) < 1 < \hat\nu(x) <\|Df_x(v^u)\|
\end{equation}
for any $x \in X$ and unit vectors  
$\,v^s\in E^s(x)$ and $\,v^u\in E^u(x)$.
The subbundles $E^s$ and $E^u$ are called stable and unstable. 
They are tangent to the stable and unstable foliations 
$W^s$ and $W^u$ respectively (see, for example \cite{KH}).
Using \eqref{Anosov def} we choose a small positive number
$\rho$ such 
 that for every $x \in \M$ we have
 $\| Df_y\| < \nu (x)$ for all $y$ in the ball  in $W^{s}(x)$ centered 
at $x$ of  radius $\rho$  in the intrinsic metric of $W^{s}(x)$.
We refer to this ball as the local stable manifold of $x$ and denote it by $W^{s}_{loc}(x)$.
Local unstable manifolds 
are defined similarly. It follows that for all $n\in \N$ and $x\in X$,
$$
\begin{aligned}
&\dist (f^nx, f^ny)< \nu^n_x \cdot  \dist(x,y) \quad \text{for all } y\in W^s_{loc}(x),\\
&\dist (f^{-n}x, f^{-n}y)< \hat \nu^{-n}_x \cdot  \dist(x,y) \quad \text{for all }y\in W^u_{loc}(x),
\end{aligned}
$$
where $\,\nu^n_x=\nu(f^{n-1}x)\cdots\nu(x) \,\text{ and }\,
\hat\nu^{-n}_x=(\hat \nu(f^{-n}x))^{-1}\cdots  (\hat\nu(f^{-1}x))^{-1}.$
We also assume that $\rho$ is sufficiently small so that  
$\,W^{s}_{loc}(x) \cap W^{u}_{loc}(z)\,$ consists of a single point
for any sufficiently close $x$ and $z$ in $X$. 
This property is called {\em local product structure}.

A diffeomorphism is said to be {\it (topologically) transitive} if there is a point $x$ in $X$
with dense orbit. All known examples of Anosov diffeomorphisms have this property.
\vskip.2cm


\noindent{\bf Mixing diffeomorphisms of locally maximal hyperbolic sets.} (See Section 6.4 in \cite{KH} for more details.) 
More generally, let $f$ be a diffeomorphism of a manifold $\M$.
A compact $f$-invariant  set $X \subset \M$ is
called {\em hyperbolic} if there  exist a continuous $Df$-invariant splitting 
$T_X \M = E^s\oplus E^u$, and a Riemannian metric and 
continuous functions $\nu$, $\hat \nu$ on an open set
$U \supset X$ such that \eqref{Anosov def} holds for all $x \in X$.
Local stable and unstable manifolds are defined similarly for any $x \in X$
 and we denote  their intersections with $X$ by $W^{s}_{loc}(x)$ and  $W^{y}_{loc}(y)$.
The set $X$ is called {\em locally maximal} if 
$X= \bigcap_{n\in \Z} f^{-n }(U)$ for some open set $U\supset X$. 
This property ensures that $W^{s}_{loc}(x) \cap W^{y}_{loc}(y)$ exists in $X$,
so that $X$ has local product structure.
The map $f|_X$ is called {\em topologically mixing}\,
 if for any two open non-empty subsets $U,V$ of $X$
 there is $N\in \N$ such that $\, f^n(U)\cap V\ne \emptyset\,$ for all $n\ge N$.
 
 In the case of $X=\M$ this gives an Anosov diffeomorphism. 
It is known that mixing holds automatically for 
transitive Anosov diffeomorphisms  of connected manifolds. 


\vskip.2cm
\noindent{\bf Mixing subshifts of finite type.}
Let $M$ be $k \times k$ matrix with entries from $\{ 0,1 \} $ such that all 
entries of $M^N$ are positive for some $N$. Let
$$
X= \{ \,x=(x_n) _{n\in \Z}\, : \,\; 1\le x_n\le k \;\text{ and }\;
 M_{x_n,x_{n+1}}=1 \,\text{ for every } n\in \Z \,\}.
$$
\noindent The shift map $f:X\to X\,$ is defined by 
$(fx)_n=x_{n+1}$.
The system $(X,f)$ is called a {\em  mixing  subshift of finite type}. 
We fix $\nu \in (0,1)$ and consider the metric 
$$
\dist(x,y) = d_\nu(x,y)=\nu^{n(x,y)},
\;\text{ where }\;n(x,y)=\min\,\{ \,|i|\,: \; x_i \ne y_i  \}.
$$
The set $X$ with this metric is compact. The metrics $d_\nu$ for different  
values of $\nu$ are H\"older equivalent.
The following sets play the role of the local stable and unstable 
manifolds of $x$
$$
W^s_{loc}(x)=\{\,y: \;\, x_i=y_i, \;\;i\ge 0\,\}, \quad 
W^u_{loc}(x)=\{\,y:\;\, x_i=y_i, \;\;i\le 0\,\}.
$$
Indeed, for all $x\in X$ and $n\in \N$,
$$
\begin{aligned}
&\dist (f^n x, f^n y )= \nu^n \, \dist  (x,y)  \quad\text{for all } y\in W^{s}_{loc}(x),\\
&\dist  (f^{-n}x, f^{-n}y )= \nu^n\, \dist  (x,y) \quad\text{for all } y\in W^{u}_{loc}(x),
\end{aligned}
$$
and for any $x, z\in X$ with $\dist(x,z) < 1$ the intersection of $W^s_{loc}(x)$ and $W^u_{loc}(z)$
consists of a single point, $y=(y_n)$ such that $y_n=x_n$ for $n\ge 0$
and $y_n=z_n$ for $n\le 0$.


\section{Proofs of Theorem \ref{bounded} and Proposition \ref{measurable}}

\subsection{Fiber bunching and closing property}
First we show that the cocycle $\A$ is {\em fiber bunched}, i.e.
$Q(x,n)$ is dominated by the contraction and expansion in the base in the following sense.

\begin{definition} \label{bunching def}
 A $\beta$-H\"older  cocycle $\A$ 
over a hyperbolic diffeomorphism $f$ is\,
 {\em fiber bunched} if 
there exist numbers $\theta<1$ and $L$  such that for all $x\in X$ and $n\in \N$,
\begin{equation}\label{fiber bunched}
Q_\A(x,n) \cdot  (\nu^n_x)^\beta < L\, \theta^n \quad\text{and}\quad
Q_\A(x,-n) \cdot  (\hat \nu^{-n}_x)^\beta < L\, \theta^n.
\end{equation}
For a subshift of finite type, $\nu(x)=\nu$ and $\hat \nu (x)=1/\nu$, and so the conditions become
 $$
Q_\A(x,n) \cdot  \nu^{\beta |n|} < L\, \theta^{|n|}
\quad \text {for all }n\in \Z.
$$

\end{definition}

Fiber bunching plays an important role in the study of cocycles over hyperbolic systems.
In particular, it ensures certain closeness of the cocycle at the points on the same
stable/unstable manifold.

\begin{proposition} \cite[Proposition 4.2(i)]{KS13}\label{close to Id} 
If $\A$ is  fiber bunched, then there exists $c>0$ 
such that for any $x\in X$ and $y\in W^s_{loc}(x)$,
$$
\|(\A^n_y)^{-1} \circ  \A^n_x - \Id\,\| \leq c\,\dist (x,y)^{\beta}\,
\quad\text{for every }n\in \N, \quad
$$
and similarly for any $x\in X$ and $y\in W^u_{loc}(x)$,
$$
\|(\A^{-n}_y)^{-1} \circ \A^{-n}_x - \Id\,\| \leq c\,\dist (x,y)^{\beta}\,
\quad\text{for every }n\in \N.
$$
\end{proposition}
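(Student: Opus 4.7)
The plan is to introduce the deviation $D_n := (\A^n_y)^{-1} \circ \A^n_x$ and control $D_n - \Id$ by telescoping. Since $D_0 = \Id$, one has $D_n - \Id = \sum_{k=1}^n (D_k - D_{k-1})$, and the key algebraic identity
$$
D_k - D_{k-1} = (\A^{k-1}_y)^{-1} \circ E_{k-1} \circ \A^{k-1}_x, \qquad E_{k-1} := A(f^{k-1}y)^{-1} A(f^{k-1}x) - \Id,
$$
follows by writing $\A^k_z = A(f^{k-1}z)\A^{k-1}_z$ for $z=x,y$ and rearranging. So the problem reduces to summable control of $\|D_k - D_{k-1}\|$.

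Next I would estimate each factor. Since $A^{-1}$ is uniformly bounded on the compact base, H\"older continuity of $A$ together with the stable contraction $\dist(f^{k-1}x,f^{k-1}y) \le \nu^{k-1}_x \dist(x,y)$ gives
$$
\|E_{k-1}\| \le c_1 (\nu^{k-1}_x)^\beta \dist(x,y)^\beta.
$$
For the outer factors, rewriting $\A^{k-1}_x = \A^{k-1}_y \circ D_{k-1}$ yields the key inequality
$$
\|(\A^{k-1}_y)^{-1}\| \cdot \|\A^{k-1}_x\| \le Q_\A(y,k-1)\cdot \|D_{k-1}\|,
$$
and applying the fiber bunching hypothesis \eqref{fiber bunched} at $y$ (after noting that $\nu^{k-1}_x$ and $\nu^{k-1}_y$ are comparable, since $f^j y \in W^s_{loc}(f^j x)$ for $0 \le j < k$ and $\nu$ is continuous) produces
$$
\|D_k - D_{k-1}\| \le c_2\, \theta^{k-1}\, \dist(x,y)^\beta\, \|D_{k-1}\|.
$$

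The final step is to close the recursion. Iterating this bound gives
$$
\|D_n\| \le \prod_{k=1}^n \bigl(1 + c_2\, \theta^{k-1}\dist(x,y)^\beta\bigr) \le \exp\bigl(c_2 \dist(x,y)^\beta/(1-\theta)\bigr),
$$
which is uniformly bounded since $X$ has finite diameter. Substituting this back and summing the geometric series $\sum \theta^{k-1}$ yields $\|D_n - \Id\| \le c\,\dist(x,y)^\beta$ with $c$ independent of $n$. The unstable case is obtained by running the same argument for the inverse cocycle over $f^{-1}$, using the second half of \eqref{fiber bunched}.

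The main delicate point, in my view, is not any single estimate but the bookkeeping between the base contraction rates at $x$ and at $y$: the H\"older-plus-contraction estimate on $\|E_{k-1}\|$ produces $\nu^{k-1}_x$, whereas fiber bunching is stated at $y$. This is trivial for subshifts of finite type (where $\nu$ is constant) but for general hyperbolic diffeomorphisms it must be handled via continuity of $\nu$ along local stable leaves. Once that comparison is absorbed into the constants, the remainder is routine telescoping together with the bootstrap bound on $\|D_{k-1}\|$.
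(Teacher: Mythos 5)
Your overall scheme---setting $D_n=(\A^n_y)^{-1}\circ\A^n_x$, telescoping $D_n-\Id=\sum_{k=1}^n(D_k-D_{k-1})$ with $D_k-D_{k-1}=(\A^{k-1}_y)^{-1}\circ E_{k-1}\circ\A^{k-1}_x$, bootstrapping a uniform bound on $\|D_{k-1}\|$, and summing a geometric series---is exactly the argument behind the cited \cite[Proposition 4.2(i)]{KS13}, which the present paper does not reproduce. However, the step where you rewrite $\A^{k-1}_x=\A^{k-1}_y\circ D_{k-1}$, bound $\|(\A^{k-1}_y)^{-1}\|\cdot\|\A^{k-1}_x\|\le Q_\A(y,k-1)\,\|D_{k-1}\|$, and then invoke fiber bunching at $y$ contains a genuine gap: your bound on $\|E_{k-1}\|$ carries the factor $(\nu^{k-1}_x)^\beta$, so you need the ratio $(\nu^{k-1}_x/\nu^{k-1}_y)^\beta=\prod_{j=0}^{k-2}\bigl(\nu(f^jx)/\nu(f^jy)\bigr)^\beta$ to be bounded uniformly in $k$, and mere continuity of $\nu$ does not give this. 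The logarithm of that ratio is only controlled by $\sum_j\omega\bigl(\bar\nu^{\,j}\dist(x,y)\bigr)$, where $\omega$ is the modulus of continuity of $\log\nu$ and $\bar\nu=\max_x\nu(x)<1$; in the definition of hyperbolicity $\nu$ is only assumed continuous, not H\"older, and for a general modulus (e.g. $\omega(t)\sim 1/\log(1/t)$) this series diverges, so the Birkhoff products $\nu^{k-1}_x$ and $\nu^{k-1}_y$ need not be uniformly comparable. As written, the recursion $\|D_k-D_{k-1}\|\le c_2\theta^{k-1}\dist(x,y)^\beta\|D_{k-1}\|$ is therefore not justified.

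The gap is local and closes with a one-line change: pair both estimates at the same point. Factor the other way, $(\A^{k-1}_y)^{-1}=D_{k-1}\circ(\A^{k-1}_x)^{-1}$, which gives $\|(\A^{k-1}_y)^{-1}\|\cdot\|\A^{k-1}_x\|\le\|D_{k-1}\|\,Q_\A(x,k-1)$; then the factor $(\nu^{k-1}_x)^\beta$ already present in your estimate for $\|E_{k-1}\|$ combines directly with the fiber bunching condition \eqref{fiber bunched} at $x$ to yield $\|D_k-D_{k-1}\|\le c_1L\,\theta^{k-1}\dist(x,y)^\beta\,\|D_{k-1}\|$, with no comparison of contraction rates at $x$ and $y$ needed. (Alternatively, since $x\in W^s_{loc}(y)$ up to adjusting the radius, you may run the H\"older-plus-contraction estimate with $\nu^{k-1}_y$ and keep your use of fiber bunching at $y$.) With that correction, the rest of your argument---the bound $\|D_n\|\le\exp\bigl(c_2\dist(x,y)^\beta/(1-\theta)\bigr)$, the geometric summation giving $\|D_n-\Id\|\le c\,\dist(x,y)^\beta$, and the unstable case via the inverse cocycle over $f^{-1}$ using the second inequality in \eqref{fiber bunched}---is correct and coincides with the cited proof.
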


\noindent This proposition was proven in the finite dimensional case but the 
argument holds for Banach cocycles without modifications.

The base systems that we are considering satisfy the following {\it closing property.}

\begin{lemma} (Anosov Closing Lemma\,  \cite[6.4.15-17]{KH}) \label{Anosov}\,
Let $(X,f)$ be a topologically mixing diffeomorphism 
of a locally maximal hyperbolic set. Then there exist  
constants $D,\, \delta_0 >0$ such that for any $x \in X$ and $k\in\N$ with 
$\dist (x, f^k x) < \delta_0$ there exists a periodic point $p \in X$ with 
$f^k p =p$ such that the orbit segments $x, fx, ... , f^k x$ and 
$p, fp, ... , f^k p\,$ remain close: 
$$
\dist (f^i x, f^i p) \le D  \dist (x, f^k x)\;\,\text{ for every $i=0, ... , k$.}
$$
\end{lemma}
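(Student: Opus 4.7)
The plan is to produce the periodic point $p$ as a fixed point of $f^k$ near $x$ via a contraction-mapping argument built on the local product structure, and then to read off the orbit-closeness estimate from hyperbolic decay along stable and unstable directions. I focus on the hyperbolic diffeomorphism case; for a mixing subshift of finite type the lemma is essentially symbolic, since $\dist(x, f^k x) < \nu^N$ forces $x_n = x_{n+k}$ for $|n| \le N$, and then setting $p_n := x_{n \bmod k}$ gives a $k$-periodic orbit which satisfies the required estimate with $D=1$.

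First, I would shrink $\delta_0$ so that for all $y, z \in X$ with $\dist(y, z) < \delta_0$ the bracket $[y, z] := W^s_{loc}(y) \cap W^u_{loc}(z)$ is a single point varying H\"older-continuously in $(y, z)$, with
$$
\max\{\dist(y, [y,z]),\; \dist(z, [y,z])\} \le C\, \dist(y, z).
$$
This equips a neighborhood of $x$ with local product coordinates $y \leftrightarrow (a, b)$, $a \in W^u_{loc}(x)$, $b \in W^s_{loc}(x)$. In these coordinates $f^k$ becomes a ``hyperbolic'' map up to a translation of size $O(\dist(x, f^k x))$ coming from the offset $f^k x \neq x$: it contracts the stable coordinate by a factor bounded by $\nu_x^k$ and expands the unstable coordinate by a factor bounded below by $\hat\nu_x^k$.

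Next, I would set up a self-map $T$ on a small ball $B$ around $x$ whose fixed point solves $f^k p = p$. Writing $y = [a, b]$ as above, put $T(y) = [\alpha(y), \beta(y)]$, where $\beta(y)$ is the projection of $f^k y$ onto $W^s_{loc}(x)$ along unstable leaves, and $\alpha(y)$ is the projection of $f^{-k} y$ onto $W^u_{loc}(x)$ along stable leaves. Each component is contracting because $f^k$ (resp.\ $f^{-k}$) contracts along the stable (resp.\ unstable) foliation, and the inhomogeneous term has size $O(\dist(x, f^k x))$. Banach's fixed point theorem then produces a unique $p \in B$ with $T(p) = p$, which unpacks to $f^k p = p$, and a geometric-series bound gives $\dist(p, x) \le D'\, \dist(x, f^k x)$.

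Finally, to obtain the orbit-closeness estimate for each $i \in \{0, \dots, k\}$, I would split $\dist(f^i x, f^i p)$ into its stable and unstable parts via the bracket $[f^i x, f^i p]$: the stable part is controlled by iterating $f$ forward from $x$ and $p$ starting at $i=0$, and the unstable part by iterating $f$ backward from $f^k x$ and $f^k p = p$. The hyperbolic bounds from Section~2 make each iterate strictly contract the respective part, so both stay below $D\, \dist(x, f^k x)$ for a uniform $D$. The main obstacle is the bookkeeping of the ``projection errors'' caused by $f^k$ not preserving $W^s_{loc}(x)$ or $W^u_{loc}(x)$, but rather sending them to leaves through $f^k x$; controlling this cleanly and verifying that $T$ is strictly contracting rest on the H\"older continuity of the stable and unstable holonomies encoded in the bracket estimate.
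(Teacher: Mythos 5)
Two remarks before the main point: the paper does not prove this lemma at all --- it is quoted from Katok--Hasselblatt [KH, 6.4.15--17], with only the observation that for subshifts it can be seen directly and that the local product structure upgrades it to the exponential estimate \eqref{d-close-traj}. Your subshift argument is correct (after noting that $\dist(x,f^kx)<1$ gives $x_k=x_0$, so the periodized word $x_0\dots x_{k-1}$ is admissible), and your last step --- splitting $\dist(f^ix,f^ip)$ at the bracket point and controlling the stable part forward from time $0$ and the unstable part backward from time $k$ --- is exactly how one gets the estimate \emph{once $p$ is known to exist}; it is also how the paper obtains \eqref{d-close-traj}.

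The genuine gap is in the construction of $p$: the map $T$ is not defined on any ball $B$ around $x$ of size independent of $k$, so Banach's fixed point theorem cannot be invoked as stated. For $\beta(y)$ to make sense you need $f^k y$ to lie in the product neighborhood of $x$; since $f^k$ expands unstable distances by at least $\hat\nu^k_x$, this forces the unstable coordinate of $y$ to be exponentially small in $k$, not of uniform size $\delta$. Worse, $\alpha(y)$ requires $f^{-k}y$ to be near $x$, and this fails even at $y=x$: the hypothesis controls $\dist(x,f^kx)$ but says nothing about $\dist(x,f^{-k}x)$, which can be of the order of the diameter of $X$. (Points $y$ near $x$ are also near $f^kx$, so $f^{-k}y$ is near $x$ only when $y$ is exponentially close to $W^u_{loc}(f^kx)$.) Thus $T$ is defined only on an exponentially thin box around the bracket point $W^s_{loc}(x)\cap W^u_{loc}(f^kx)$, and its image does not stay in that box, so there is no self-map to contract. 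Your linear-model intuition (update the stable coordinate by $f^k$ and the unstable one by $f^{-k}$) is the right one, but turning it into a proof requires either re-centering at the bracket point with carefully chosen $k$-dependent domains, or --- as in the cited proof and in the shadowing-plus-expansivity argument --- running the contraction over the whole orbit segment, i.e.\ on a product of uniformly small balls around $x, fx,\dots,f^{k-1}x$, applying $f$ one step at a time so that every bracket is taken inside a uniform product neighborhood. That formulation also sidesteps your remaining issue that the ``projections'' (unstable/stable holonomies) are in general only H\"older, not Lipschitz, so contraction of your composed map cannot simply be read off from the contraction of $f^k$ along the stable leaves; the one-step scheme needs only the uniform bracket estimate $\max\{\dist(y,[y,z]),\dist(z,[y,z])\}\le C\,\dist(y,z)$ together with the one-step hyperbolicity constants, and it yields the bound $\dist(f^ix,f^ip)\le D\,\dist(x,f^kx)$ (indeed the exponential version) directly.
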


\noindent For subshifts of finite type this property can be observed directly.
Moreover, for the systems we consider there exist $D'>0$ and $0<\gamma <1$ 
such that for the above trajectories
\begin{equation}  \label{d-close-traj}
\dist (f^i x, f^i p) \le D'  \dist (x, f^k x) \, \gamma^{ \min\,\{\,i,\,k-i \,\} }
\quad\text{for every }i=0, ... , k.
\end{equation}
Indeed, the local product structure gives existence of a point 
$y=W^{s}_{loc}(p) \cap W^{u}_{loc}(x)$. Then the contraction/expansion along
stable/unstable manifolds yields the exponential closeness in \eqref{d-close-traj}.

We obtain fiber bunching of the cocycle $\A$ from the following proposition.
Clearly, the assumption in part (i) of the theorem is weaker then the ones in  (i-iv),
and so it suffices to deduce fiber bunching from the assumption in (i).

\begin{proposition} \cite[Corollary 1.6(ii)]{KS16} \label{norm}
Let $f$ be a homeomorphism of a compact metric space $X$
satisfying the closing property \eqref{d-close-traj} 
and let $\A$ be a  H\"older continuous Banach 
cocycle over $f$. If for some numbers $C$ and $s$ we have
$$
Q(p,k) \le Ce^{s k} \quad\text{whenever }p=f^kp,
$$
then for each $\e>0$ there exists a number $C'_\e$ such that
$$
Q(x,n)\le C'_\e \,e^{(s+\e)|n|} \quad\text{for all }x\in X \text{ and }n\in \Z.
$$
\end{proposition}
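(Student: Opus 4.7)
The plan is to bound $Q(x,n)$ at an arbitrary point by comparing the orbit segment $x,fx,\dots,f^nx$ with a nearby periodic orbit produced by the closing property~\eqref{d-close-traj}, and then to absorb the small gap between the two segments into the $\epsilon$ loss in the exponent. By symmetry it suffices to treat $n\ge 1$. Let $M_0=\sup_X\max(\|A\|,\|A^{-1}\|)$, which is finite by continuity of $A$ and compactness of $X$, and fix $\epsilon>0$. First I would select a periodic orbit shadowing a suitable extension of the segment: cover $X$ by finitely many balls of radius $\delta_0/2$ and apply the pigeonhole principle to $\{f^jx:0\le j\le N\}$ with $N=\lceil(1+\eta)n\rceil$ for a small $\eta=\eta(\epsilon,s,M_0)>0$ to be chosen, obtaining indices $0\le i\le n\le j\le N$ with $f^ix$ and $f^jx$ in the same ball. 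Then $\dist(f^ix,f^jx)<\delta_0$, and the closing property yields a periodic $p$ with $f^kp=p$, where $k=j-i$, such that $\dist(f^{i+l}x,f^lp)\le D'\delta_0\gamma^{\min(l,k-l)}$ for $0\le l\le k$.

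The core step is to transfer the periodic bound $Q(p,k)\le Ce^{sk}$ to $Q(y,k)$, where $y=f^ix$. Setting $W_k=\A_y^k(\A_p^k)^{-1}$, I would show that the quasiconformal distortion of $W_k$ is bounded by a constant $C''$ independent of $k$, so that $Q(y,k)\le C''\cdot Q(p,k)\le C''Ce^{sk}$. The bound on $W_k$ comes from the recursion $W_l=A(f^{l-1}y)W_{l-1}A(f^{l-1}p)^{-1}$ together with the H\"older estimate $\|A(f^ly)-A(f^lp)\|\le c\,\dist(f^ly,f^lp)^\beta$ and the geometric shadowing factor $\gamma^{\beta\min(l,k-l)}$: the latter makes the H\"older increments summable in $l$, and combined with the periodic bound on $Q$ it yields a $k$-uniform total error. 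The detailed execution of this telescoping estimate is the content of \cite[Cor.~1.6]{KS16}.

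Finally, the identity $\A_x^n=(\A_{f^nx}^{j-n})^{-1}\circ\A_y^k\circ\A_x^i$ gives $Q(x,n)\le M_0^{2(k-n)}\,Q(y,k)\le\mathrm{const}\cdot e^{sk+2(k-n)\log M_0}$. Since $k-n\le\eta n+O(1)$, choosing $\eta$ so that $\eta(s+2\log M_0)<\epsilon$ produces $Q(x,n)\le C'_\epsilon e^{(s+\epsilon)n}$ for $n$ large; the finitely many small $n$ are absorbed into the constant. The negative-time case follows by applying the statement to the cocycle generated by $A\circ f^{-1}$ composed with inversion.

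The main obstacle is the $k$-uniform estimate on the quasiconformal distortion of $W_k$. A direct, unweighted telescoping of the H\"older errors introduces factors of order $\|\A_y^l\|\cdot\|\A_p^{k-l}\|^{-1}$ at each step, which can grow as $M_0^{O(k)}$ and swamp the target rate $e^{sk}$. The resolution, which is the substance of the approximation theorems in \cite{KS16}, is to work with the full quasiconformal distortion rather than with individual operator norms, and to exploit the exponential shadowing in the interior of the segment \emph{together with} the periodic hypothesis on $Q$; this is precisely what keeps the error accumulation uniform in $k$.
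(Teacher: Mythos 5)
First, a point of orientation: the paper does not prove Proposition \ref{norm} at all --- it is imported verbatim from \cite[Corollary 1.6(ii)]{KS16} and then applied with $s=0$, so the only thing to compare your sketch with is the argument of \cite{KS16}, which proceeds through Lyapunov exponents of ergodic invariant measures (Lyapunov norms, the subadditive ergodic theorem, and periodic approximation of exponents), followed by a subadditivity/compactness step to pass from measure-wise to uniform bounds. Your sketch takes a different, pointwise route, and as written it has two genuine gaps. The first is the pigeonhole/closing step: for an arbitrary point $x$ there is no reason why the segment $x,fx,\dots,f^{\lceil(1+\eta)n\rceil}x$ should contain indices $i\le n\le j$ with $i+(j-n)\le\eta n$ and $\dist(f^ix,f^jx)<\delta_0$. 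For a heteroclinic point of a subshift (past equal to a fixed symbol $a$, future equal to $b$, with the transition near time $n/2$) the first half of the segment stays near $a^\infty$ and the second half near $b^\infty$, so no single near-return covers more than about half of $[0,n]$ and the leftover pieces contribute $M_0^{O(n)}$, destroying the estimate. Closing arguments of this kind are applied to $\mu$-generic points, where Poincar\'e recurrence supplies the near-return; the uniform statement over all $(x,n)$ is then obtained by a separate argument for the subadditive sequence $\log Q(x,n)$, not by shadowing each orbit segment individually.

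The second and more serious gap is the core claim $Q(y,k)\le C''\,Q(p,k)$ with $C''$ independent of $k$, which you assert rather than prove. The telescoping you describe is summable only when the H\"older increments beat the growth of the cocycle along the segment, i.e.\ under a condition of fiber-bunching type such as $M_0^2\gamma^\beta<1$ --- but fiber bunching is exactly what Proposition \ref{norm} is being used to establish in this paper, so invoking it here would be circular. The hypothesis $Q(p,k)\le Ce^{sk}$ gives no control of $Q(p,l)$ or of $\|(\A_p^l)^{-1}\|$ for intermediate times $0<l<k$ (these are not periodic returns), so the error terms in your recursion for $W_l$ are of size $M_0^{O(k)}$ and are not tamed by ``working with the full quasiconformal distortion together with the periodic hypothesis''; that phrase does not identify a mechanism. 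In \cite{KS16} the comparison between a generic segment and the shadowing periodic orbit is carried out in Lyapunov norms adapted to an ergodic measure and loses a factor $e^{\e k}$ (which is precisely why the conclusion carries the $\e$), and the logical direction there is from exponents of measures to periodic data; a constant-factor pointwise comparison of distortions along a single closing is not what that machinery provides, and nothing in your sketch substitutes for it. Deferring the step to \cite{KS16} is legitimate --- the paper itself does exactly that --- but then your surrounding argument is not an independent proof, and both of its structural steps, as written, would fail.
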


\noindent We apply the proposition with  $s=0$ and take $\e>0\,$ such that  
$$
e^{\e} \nu^\b <1 \;\text{ and }\; e^{\e} ( \hat \nu^{-1})^\b <1,
\;\text{ where }\;\nu=\max_x \nu(x) \;\text{ and }\;\hat \nu^{-1}=\max_x \hat\nu(x)^{-1}.
$$
Then the fiber bunching condition \eqref{fiber bunched} are satisfied with 
$$
\theta=\max \,\{e^{\e} \nu^\b, \; e^{\e} ( \hat \nu^{-1})^\b \} \quad\text{and}\quad 
L=C'_\e. 
$$

\vskip.1cm 

\subsection{Proof of (i)}
Now we show that  quasiconformal distortion of $\A$ is bounded along a dense orbit.
Since $f$ is  transitive, there is a point $z\in X$ such that its orbit
$$
O(z)=\{ f^n z:\; n\in \Z \} \quad\text{is dense in }X.
$$
We take $\delta_0$ sufficiently small to apply  Anosov Closing 
Lemma \ref{Anosov} and so that  
$$(1+c\delta_0^\beta)/(1-c\delta_0^\beta)\le 2,
\quad\text{where $c$ is as in Proposition \ref{close to Id}.}
$$ 
Let $f^{n_1}z$ and  $f^{n_2}z$ be two points of $O(z)$ with 
$\delta:=\dist(f^{n_1}z, f^{n_2}z)< \delta_0$.
We assume that $n_1<n_2$ and denote 
$$
w=f^{k_1}z \quad\text{and} \quad k=n_2-n_1, \quad \text{so that }\;
\delta =\dist(w, f^k w)< \delta_0. 
$$
Then there exists $p\in X$ with $f^k p =p$ 
such that $\dist(f^i w, f^i p) \le D\delta$ for $i=0, ... , k$.
Let $y$ be the point of intersection of  $W^s_{loc}(p)$ and  $W^u_{loc}(w)$.
We apply Proposition \ref{close to Id} to $p$ and $y$ and to $f^k y$ and $f^k w$ and obtain 
\begin{equation}\label{cl}
\begin{aligned}
 & \|(\A^k_y)^{-1} \circ \A^k_p - \Id \,\| \leq c\delta^{\beta}
\quad\text{and}\quad  \\
& \| \, \A_w^k \circ (\A^k_y)^{-1} - \Id \| = \|(\A^{-k}_{f^k w})^{-1} \circ \A^{-k}_{f^k y} - \Id \,\| 
  \leq c\delta^{\beta}.
 \end{aligned}
\end{equation}

\begin{lemma} \label{distort}
Let $A, B\in GL(V).$ If either $\|A^{-1}B-\Id\,\|\le r $ or $\|AB^{-1}-\Id\,\|\le r$
 for some $r<1$, then 
$$
    (1-r)/(1+r) \le Q(A)/Q(B)\le (1+r)/(1-r),
$$
where $Q(A)=\|A\|\cdot \|A^{-1}\|$ and  $Q(B)=\|B\|\cdot \|B^{-1}\|$
\end{lemma}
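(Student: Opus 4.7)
The plan is to reduce the inequality to the standard Neumann-series estimate for operators close to the identity. Assume first the hypothesis $\|A^{-1}B - \Id\| \le r$, and set $E = A^{-1}B - \Id$, so $\|E\|\le r<1$. Then $\Id+E$ is invertible in $GL(V)$ with $\|\Id+E\|\le 1+r$ and, by the Neumann series, $\|(\Id+E)^{-1}\|\le (1-r)^{-1}$.

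Next, I would use the two factorizations $B=A(\Id+E)$ and $A=B(\Id+E)^{-1}$, together with their inverse counterparts $B^{-1}=(\Id+E)^{-1}A^{-1}$ and $A^{-1}=(\Id+E)B^{-1}$, to obtain submultiplicative bounds relating the four quantities $\|A\|,\|A^{-1}\|,\|B\|,\|B^{-1}\|$. Each bound simply estimates one operator by the product of the two norm estimates above, yielding
$$
\|B\|\le (1+r)\|A\|,\qquad \|A\|\le (1-r)^{-1}\|B\|,
$$
$$
\|B^{-1}\|\le (1-r)^{-1}\|A^{-1}\|,\qquad \|A^{-1}\|\le (1+r)\|B^{-1}\|.
$$
Multiplying the pair that bounds $\|A\|\cdot\|A^{-1}\|$ from above by constants times $\|B\|\cdot\|B^{-1}\|$ gives $Q(A)\le \tfrac{1+r}{1-r}Q(B)$, and the reverse pair gives $Q(A)\ge \tfrac{1-r}{1+r}Q(B)$, which is the claimed double inequality.

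For the alternative hypothesis $\|AB^{-1}-\Id\|\le r$, I would run the symmetric argument with $F=AB^{-1}-\Id$ and the factorization $A=(\Id+F)B$, which leads to the same four norm inequalities and hence the same conclusion. There is no genuine obstacle here: once the Neumann series bound $\|(\Id+E)^{-1}\|\le(1-r)^{-1}$ is in hand, the proof is a routine bookkeeping of submultiplicativity, and the condition $r<1$ is used only to guarantee that $\Id+E$ (resp.\ $\Id+F$) is invertible with a quantitative bound on its inverse.
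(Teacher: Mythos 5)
Your proof is correct and takes essentially the same route as the paper: both rest on the factorization $B=A(\Id+E)$ together with the bounds $\|\Id+E\|\le 1+r$ and $\|(\Id+E)^{-1}\|\le (1-r)^{-1}$, combined by submultiplicativity of norms. The paper merely packages the bookkeeping through the quantity $Q$ itself, noting $Q(\Id+E)\le (1+r)/(1-r)$ (deduced from the two-sided estimate on unit vectors, invertibility being automatic since $\Id+E=A^{-1}B$) and then applying $Q(A_1A_2)\le Q(A_1)Q(A_2)$, whereas you bound the four operator norms separately via the Neumann series; these are the same argument up to presentation.
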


\begin{proof} 
Clearly,  $Q(A)=Q(A^{-1})$ and $Q(A_1A_2)\le Q(A_1)Q(A_2)$.
\vskip.1cm

Suppose that $\|A^{-1}B-\Id\,\|\le r $.  We denote $\Delta= A^{-1}B-\Id$. 
Since for any unit vector $v$, $\, 1-r\le \|(\Id+\Delta)v\| \le 1+r$, we have 
$Q(\Id+\Delta) \le (1+r)/(1-r)$.

Since $B= A\,(\Id + \Delta)$ we obtain
   $$
     Q(B)\le Q(A)\cdot Q(\Id+\Delta) \le Q(A)\cdot(1+r)/(1-r).
   $$
Also, $A^{-1}=(\Id+\Delta)B^{-1}$ and hence
   $$
    Q(A)=Q(A^{-1})\le Q(\Id+\Delta) \cdot Q(B^{-1}) \le (1+r)/(1-r)\cdot Q(B),
   $$
and the estimate for $Q(A)/ Q(B)$ follows. The case of $\|AB^{-1}-\Id\,\|\le r$ is similar.
\end{proof}
\vskip.2cm

It follows from the Lemma \ref{distort} and the choice of $\delta_0$ that
$$
   Q(y,k)/Q(p,k) \le (1+c\delta^\beta)/(1-c\delta^\beta)\le 2 
   \quad\text{and}\quad Q(w,k)/Q(y,k) \le 2,
   $$
and hence 
$$
Q(w,k) = Q(f^{n_1} z, \, n_2-n_1) \le 4Q(p,k) \le 4C_{per}.
$$
We take $m\in \N$ such that the set  $\{f^j z;\; |j|\le m\}$ is 
$\delta_0$-dense in $X$. Let
$$Q_m =\max \,\{\,Q(z,j):\; |j|\le m\}.
$$
 Then for any $n>m$
there exists $j$, $|j|\le m$, such that $\dist (f^n z, f^j z)\le \delta_0$
and hence 
  $$
   Q(z,n) \le Q(z,j)\cdot Q(f^j z,\, n-j) \le Q_m \cdot 4C_{per}.
  $$ 
The case of $n<-m$ is similar. 
Thus $Q(z,n)$ is uniformly bounded  in $n \in \Z$, and hence
$Q(f^\ell z,n)$ is uniformly bounded in $\ell,n\in \Z\,$ since
$$
Q(f^\ell z,n) \le Q(f^\ell z, -\ell)\cdot Q(z,n+l)=Q(z, \ell)\cdot Q(z,n+l).
$$
Since $O(z)$ is dense in $X$ and $Q(x,n)$  is continuous on $X$ for each $n$,
this implies that $Q(x,n)$ is uniformly bounded 
in $x \in X$ and $n \in \Z$. 

\vskip.1cm


\subsection{Proof of (ii)}  
Since the set $\A_P$ is bounded,  there is a constant $C'_{per}$ such that 
$$
\max\,\{\|\A^k_p\|,\, \|(\A^k_p)^{-1}\|\} \le C'_{per}\;  \quad\text{whenever $f^k p=p.$}
$$
 We  show that  there exists a constant $C'$ such that 
$$ 
\max\,\{\|\A^n_x\|,\, \|(\A^n_x)^{-1}\|\} \le C' \;\quad\text{for all $x$ and $n$.}
$$
Let $z$, $n_1$, $n_2$, 
$w=f^{n_1}z$,  $k=n_2-n_1$, $y$ and $p$ be  as in (i).
 Since 
 $$(\A^k_y)^{-1} = \left( \Id+ ((\A^k_y)^{-1} \circ \A^k_p - \Id)\right) \circ (\A_p^k)^{-1},
 $$ 
the first inequality in \eqref{cl} implies
$$
\|(\A^k_y)^{-1}\|  \le (1+c\delta^\beta) \cdot \|(\A_p^k)^{-1}\|
 \le (1+c\delta_0^\beta) C'_{per} \le 2 C'_{per}
$$
by the choice of $\delta_0$.
Interchanging $p$ and $y$ we obtain  $\|(\A^k_p)^{-1} \circ \A^k_y - \Id \,\| \leq c\delta^{\beta}$ and 
it follows that $\|\A^k_y \| \le 2 C'_{per}.$
Similarly, the second inequality in \eqref{cl} yields
$$
\|\A^k_w\| \le (1+c\delta^\beta) \cdot \|\A_y^k \| \le 2\|\A_y^k\| \quad \text{and}\quad
\|(\A^k_w)^{-1}\|  \le 2\|(\A_y^k)^{-1}\|,
$$
and we conclude that $\|\A^k_w\| \le 4 C'_{per}$ and $\|(\A^k_w)^{-1}\| \le 4 C'_{per}$.
It follows similarly to (i) that 
$\max\,\{ \|\A^n_x\|, \,\|(\A^n_x)^{-1}\| \}$
is uniformly bounded in $x\in X$ and in $n\in \Z$.
\vskip.1cm


\subsection{Proof of (iii)}
Now we show that  if the set $\A_P$  has compact closure, 
then so does $\A_X$. It suffices to prove that 
$\A_X$  is totally bounded, i.e. for any $\e>0$ it has a finite $\e$-net.
Since $\A_X$ is bounded by (iii), we can choose a constant $M$ such that 
$$
 \| A \|, \,\|A^{-1}\| \le M \;\text{ for all $A\in \A_X$.}  
$$
   
We fix $\,\delta_0>0\,$ sufficiently small to apply Anosov Closing Lemma \ref{Anosov} 
and so that \\
$4M^2c \,\delta_0^\b <\e/2\,$ and  take $\e'$ such that $ \,4M^2c \,\delta_0^\b+M\e' <\e.$
We fix a finite $\e'$-net $P_{\e'}=\{  P_1, ... ,\,P_\ell\}\,$ in $\A_P$. 
  As in (i), we take a point $z$ with dense orbit and
choose $m\in \N$ such that the set $\{f^j z:\; |j|\le m\}\,$  is $\delta_0$-dense in $X$. 
We will show that the set 
$$
\tilde P_\e=\{ \,P_i \circ \A^j_z \,: \;\, i=0,1,..., \ell,  \;\, | j |\le m \},
\quad\text{where $P_0=\Id,$}
$$ 
 is a finite $\e$-net for $\{ \A^n_{z} : \, n \in \Z \}$. 
 Clearly, $\, \A_z^n \in \tilde P_\e\,$ for $\,|n|\le m$.
\vskip.1cm 

Suppose $n>m$, the argument for $n<-m$ is similar. Then 
there exists  $j$ with $|j|\le m$ such that $\delta=\dist (f^j z, f^n z)\le \delta_0$ 
and hence for $x=f^j z$ and $k=n-j$
there is $p= f^kp$ such that
such that $\dist(f^i x, f^i p) \le D\delta$ for $i=0, ... , k$.  
Then it follows from the first inequality in \eqref{cl} that for 
$y=W^s_{loc}(p)\cap W^u_{loc}(x),$
$$
   \| \A^k_p - \A^k_y \|  = \|\A^k_y\circ ((\A^k_y)^{-1} \circ \A^k_p - \Id) \,\| \le
   \|\A^k_y\| \cdot \|(\A^k_y)^{-1} \circ \A^k_p - \Id) \,\|  \le Mc\delta^{\beta},
   $$
 similarly,
   $$
   \| (\A^k_p)^{-1} - (\A^k_y)^{-1} \| =\| ((\A^k_y)^{-1} \circ \A^k_p - \Id)\circ (\A^k_p)^{-1}\|
   \le  Mc\delta^{\beta},
   $$
 and so 
 $$
 d(\A_p^k, \,\A^k_y) =  \| \A^k_p - \A^k_y \| + \| (\A^k_p)^{-1} - (\A^k_y)^{-1} \| \le   2Mc\delta^{\beta}.
 $$
  It follows similarly from the 
 second inequality in \eqref{cl} that $d(\A_y^k,\, \A^k_x) \le  2 Mc\delta^{\beta}$.
 Thus  
$$
d(\A ^k_p, \,\A^k_{f^j z} ) = d(\A ^k_p, \,\A^k_{x} )\le 4Mc \delta^\b =:c' \delta^\b.
$$
Also, there exists an element $P_i$ of the $\e'$-net $P_{\e'}$ such that $d(\A ^k_p, P_i)<\e'$.
So we have $d(\A^k_{f^j z} , P_i)<c' \delta^\b+\e'$.
Then, as $\A^n_z = \A^k_{f^j z} \circ \A^j_z$,  by  Lemma \ref{AB} below we have
 $$
  d(\A^n_z , \, P_i \circ \A^j_z)= d(\A^k_{f^j z} \circ \A^j_z , \, P_i \circ \A^j_z) 
  \le M(c' \delta^\b +\e') \le \e.
  $$ 

\begin{lemma} \label{AB}
If for each of $A,\tilde A, B, \tilde B\in GL(V)$ the norm and 
the norm of the inverse are at most $M$, then
$\,d(A\circ B, \tilde A \circ \tilde  B) = M ( d(A,\tilde A)+d(B,\tilde B) ).$
\end{lemma}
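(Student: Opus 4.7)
The plan is to decompose $d(A\circ B,\,\tilde A\circ \tilde B)=\|AB-\tilde A\tilde B\|+\|(AB)^{-1}-(\tilde A\tilde B)^{-1}\|$ into its two summands and bound each by the standard telescoping trick, using submultiplicativity of the operator norm together with the uniform bound $M$. (The conclusion of the lemma is clearly meant to be an inequality rather than an equality, as the hypothesis is symmetric in $A,\tilde A$ and $B,\tilde B$ only through the estimates, not through any rigid identity.)

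For the first summand I would insert the intermediate product $\tilde A B$, writing
$AB-\tilde A\tilde B=(A-\tilde A)B+\tilde A(B-\tilde B)$.
Taking norms, submultiplicativity and the bounds $\|B\|,\|\tilde A\|\le M$ give
$\|AB-\tilde A\tilde B\|\le M\|A-\tilde A\|+M\|B-\tilde B\|$.

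For the inverse term I would use the identity $(AB)^{-1}=B^{-1}A^{-1}$ and the analogous identity for the tilded pair, then insert the intermediate product $\tilde B^{-1}A^{-1}$:
$B^{-1}A^{-1}-\tilde B^{-1}\tilde A^{-1}=(B^{-1}-\tilde B^{-1})A^{-1}+\tilde B^{-1}(A^{-1}-\tilde A^{-1})$.
The bounds $\|A^{-1}\|,\|\tilde B^{-1}\|\le M$ then yield
$\|(AB)^{-1}-(\tilde A\tilde B)^{-1}\|\le M\|A^{-1}-\tilde A^{-1}\|+M\|B^{-1}-\tilde B^{-1}\|$.
Summing the two bounds and regrouping the terms involving $A,\tilde A$ and $B,\tilde B$ produces exactly $M\bigl(d(A,\tilde A)+d(B,\tilde B)\bigr)$.

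There is no real obstacle here: the whole argument rests on the triangle inequality, submultiplicativity, and the uniform bound $M$ on the four operators and their inverses. The only mild point of care is to use $(AB)^{-1}=B^{-1}A^{-1}$ so that the same telescoping strategy applies on both sides of the metric, with $A$-factors and $B$-factors contributing $\|A-\tilde A\|+\|A^{-1}-\tilde A^{-1}\|=d(A,\tilde A)$ and $\|B-\tilde B\|+\|B^{-1}-\tilde B^{-1}\|=d(B,\tilde B)$ respectively.
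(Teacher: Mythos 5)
Your proof is correct and follows essentially the same argument as the paper: insert the intermediate product $\tilde A\circ B$ (and its analogue for the inverses), then use the triangle inequality, submultiplicativity, and the bound $M$ on the four operators and their inverses. You are also right that the stated equality should be an inequality, and your explicit use of $(AB)^{-1}=B^{-1}A^{-1}$ tidies up a point the paper passes over with ``similarly.''
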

\begin{proof} Adding and subtracting $\tilde A \circ B$ we obtain 
$$
 \| A\circ B- \tilde A \circ \tilde B\| 
 \le \| A - \tilde A \| \cdot \| B\| +  \| \tilde A\| \cdot \| B- \tilde B\|
 \le M (\| A - \tilde A \| +  \| B- \tilde B\|).
$$
Similarly  $\| A^{-1}\circ B^{-1}- \tilde A^{-1} \circ \tilde B^{-1}\| \le 
M (\| A^{-1} - \tilde A^{-1} \| +  \| B^{-1}- \tilde B^{-1}\|)$.
\end{proof}

 We conclude that the set $\tilde P_\e$ is a finite $\e$-net
for the set  $\{ \A^n_{z} : \, n \in \Z \}$ and hence this set is totally bounded.
It follows that so is the set $\{ \A^n_{f^i z} : \,i, n\in \Z \}$. Indeed, if 
$\{\tilde P_1, ... ,\tilde P_N\}$ is an $\e$-net for $\{ \A^n_{z} : \,n\in \Z \}$, then 
$$
\{ \,\tilde P_i \circ (\tilde P_j)^{-1}: \;1\le i,j \le N \, \}
$$
 is a $2M \e$-net  for $\{ \A^n_{f^i z} : \,i, n\in \Z \}$.
This follows from  Lemma \ref{AB}
since 
$$
d(A,B)=d(A^{-1},B^{-1})\quad\text{and}\quad 
\A^k_{f^i z}=\A^{k+i}_z \circ (\A^i_z)^{-1}.
$$

Since the orbit  of $z$ is dense in $X$, 
the set $\{ \A^n_{f^i z} : \,i, n\in \Z \}$ is dense in the set $\A_X$ and
hence this  set is also totally bounded and its closure is compact.

\vskip.2cm


\subsection{Proof of Proposition \ref{measurable}} 
We denote by $N$ the space of all norms $\va$ on $V$ which 
are equivalent to the fixed background norm $\va_0=\|.\|$,  and by $N_K$ the subset 
of the norms equivalent to $\|.\|$ with a constant $K>0$, i.e. 
\begin{equation}\label{equiv}
N_K=\{ \,\va: \;K^{-1}\|v\| \le \va(v) \le K\|v\| \quad\text{for all } v\in V\}.
\end{equation}
We consider the following metric on $N$.
Let $\B_1$ and $\B_2$ be the {\em closed}\,  unit balls in $V$ with respect  to norms 
$\va_1$ and $ \va_2$ in $N$. We define
\begin{equation}\label{dist}
  \dist (\va_1, \va_2) =
  \log \,\min\, \{ \, t\ge 1: \;\B_1\subseteq t\B_2\, \text{ and } \B_2 \subseteq t\B_1 \},
\end{equation}
where $t\B=\{tv:\, v\in \B\}$. It is easy to check that the minimum is attained and
that this is a distance on $N$. Moreover, 
\begin{equation}\label{dist2}
   \dist (\va_1, \va_2) = 
   \log\, \min\,\{ t\ge 1: \;\, t^{-1}\va_2(v)\le \va_1(v) \le t\va_2(v) \,\text{ for each }v\in V\,\},
\end{equation}
and hence  diam$N_K = 2\log K$ as $N_K$ is the closed ball of radius $\log K$ 
centered at $\va_0$.
We note that the space $N_K$ with this metric  is not separable in general.
 
 \begin{lemma} For each $K>0$, the distance on $N_K$ given by \eqref{dist} is equivalent to 
 $$
  \dist ' (\va_1, \va_2) =\sup \,\{ \, |\va_1(v) - \va_2(v)| : \;\|v\| \le 1 \,\},
$$ 
and hence the metric space $(N_K, \dist)$ is complete.
 \end{lemma}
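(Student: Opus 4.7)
The plan is to establish two-sided estimates comparing $\dist$ and $\dist'$ on $N_K$, and then deduce completeness under $\dist$ from completeness under $\dist'$, which is essentially a sup-norm topology on $\{\|v\|\le 1\}$ and hence straightforward.

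First I would prove that $\dist'(\va_1,\va_2) \le K\bigl(e^{\dist(\va_1,\va_2)}-1\bigr)$. Setting $t = e^{\dist(\va_1,\va_2)}$ in the formulation \eqref{dist2} gives $t^{-1}\va_2(v)\le\va_1(v)\le t\va_2(v)$, and combining the two resulting one-sided bounds yields $|\va_1(v) - \va_2(v)| \le (t-1)\va_2(v) \le (t-1)K\|v\|$ for all $v$, which proves the inequality. Conversely, writing $s = \dist'(\va_1,\va_2)$ and using $\|v\| \le K\va_2(v)$, I would obtain $\va_1(v) \le \va_2(v) + s\|v\| \le (1+sK)\va_2(v)$ and the symmetric inequality, so $\dist(\va_1,\va_2) \le \log(1+Ks)$. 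Together these show that the identity map $(N_K,\dist) \to (N_K,\dist')$ is a homeomorphism, and in fact each of the two metrics dominates a constant multiple of the other on bounded sets.

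For completeness I would take a Cauchy sequence $\{\va_n\}$ in $(N_K,\dist)$, which by the comparison above is also Cauchy in $\dist'$. This gives uniform convergence of $\va_n(v)$ on $\{\|v\|\le 1\}$, hence pointwise convergence to some function $\va$ on $V$, with the convergence uniform on every bounded set by homogeneity. Passing to the limit in homogeneity, the triangle inequality, and the bounds $K^{-1}\|v\| \le \va_n(v) \le K\|v\|$ shows that $\va\in N_K$, and the uniform convergence on the unit ball gives $\dist'(\va_n,\va)\to 0$, which via the comparison forces $\dist(\va_n,\va)\to 0$.

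The only step requiring mild care is verifying that $\va$ is a genuine norm in $N_K$ rather than a degenerate seminorm: positive definiteness follows precisely because the lower bound $\va_n(v)\ge K^{-1}\|v\|$ is preserved under pointwise limits, so this is routine. The real content of the lemma is the two-sided quantitative comparison between $\dist$ and $\dist'$ on $N_K$; once it is in place, completeness reduces to the standard argument for uniform limits of continuous seminorms.
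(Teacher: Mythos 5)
Your proposal is correct and takes essentially the same route as the paper: a two-sided comparison, $\dist'(\va_1,\va_2)\le K\bigl(e^{\dist(\va_1,\va_2)}-1\bigr)$ from \eqref{dist2} and $\dist(\va_1,\va_2)\le\log\bigl(1+K\dist'(\va_1,\va_2)\bigr)$ from the unit-ball (or norm-ratio) estimate, followed by transferring completeness from the sup-metric $\dist'$. The only cosmetic difference is that you verify completeness of $(N_K,\dist')$ directly (checking the uniform limit is again a norm in $N_K$), whereas the paper cites closedness of $N_K$ in the space of bounded continuous functions on the unit ball, which amounts to the same verification.
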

 
 \begin{proof}
 Let $a=\dist (\va_1, \va_2)$.  Then by \eqref{dist2} for any $v$ with  $\|v\|\le1$ we have 
$$ 
\va_2(v) \le e^a \va_1(v) \quad\text{and hence}\quad
\va_2(v)-\va_1(v) \le (e^a-1)\va_1(v) \le  (e^a-1)K,
$$
and similarly, $\,\va_1(v)-\va_2(v) \le  (e^a-1)K.\,$ 
Using the mean value theorem and the fact that $a\le \diam N_K=2 \log K$
we obtain 
$$
   \dist ' (\va_1, \va_2) \le K(e^a-1) \le  Ke^{ 2 \log K} a= K^3\dist (\va_1, \va_2). 
$$
Let $b=\dist' (\va_1, \va_2)$. Then $|\va_1(v) - \va_2(v)| \le b$ for all $v$ with $\|v\|\le 1$,
and hence 
$$ 
|\va_1(v) - \va_2(v)| \le Kb\quad \text{ for all $v$ with $\|v\|\le K$.}
 $$ 
 Suppose that $v\in \B_1$. Then 
 $$
   \va_1(v)\le 1 \;\Rightarrow\; \|v\| \le K \;\Rightarrow\;  \va_2(v)\le \va_1(v) +Kb \le 1+Kb
 $$
and hence $\B_1 \subseteq (1+Kb)\B_2$.  Similarly, $\B_2 \subseteq (1+Kb)\B_1$ and so
\begin{equation}\label{dist comp}
    \dist (\va_1, \va_2) \le \log (1+Kb) \le Kb = K\,\dist' (\va_1, \va_2). 
\end{equation}

So the two metrics on $N_K$ are equivalent.
 It is easy to see that $(N_K, \dist')$ is complete as a closed subset of the complete space
 of bounded continuous functions on the unit ball, and hence $(N_K, \dist)$  is also complete.
  \end{proof}

 Now we construct a Borel measurable family of norms $\va_x=\|.\|_x$ in $N_K$
  such that 
$\A_x:(V, \|.\|_x)\to (V, \|.\|_{fx})$ is an isometry  for each  $x\in X$. 
For a norm $\va\in N$ and an operator $A\in GL(V)$ we denote the pull-back of $\va$ by 
 $A^*\va(v)=\va(Av)$.  The convenience of the metric \eqref{dist}
 is that, as $A(\B_1)\subseteq t A(\B_2)$ if and only if  $\B_1\subseteq t\B_2$, 
the pull-back action of $GL(V)$ on $N$ is isometric, i.e.
 $$
   \dist (A^*\va_1, A^*\va_2) =\dist (\va_1, \va_2) \quad\text{for any }A\in  GL(V)
   \text{ and }\va_1, \va_2 \in N.
 $$
 
\begin{lemma} \label{pull}
For any $\va\in N$ and $A,\tilde A\in GL(V)$ such that $\va,A^*\va,\, \tilde A^*\va\in N_K$,
we have $\,\dist (A^*\va,\, \tilde A^*\va)  \le K^4  \|A-\tilde A\| \,$.
 \end{lemma}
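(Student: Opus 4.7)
The plan is to reduce the claim to an estimate in the simpler $\sup$-norm metric $\dist'$ from the previous lemma, and then invoke the equivalence \eqref{dist comp} on $N_K$.

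First I would control the pointwise difference of the two pulled-back norms. By the reverse triangle inequality for the norm $\varphi$, for any $v\in V$,
$$
|A^*\varphi(v)-\tilde A^*\varphi(v)| \;=\; |\varphi(Av)-\varphi(\tilde Av)| \;\le\; \varphi\bigl((A-\tilde A)v\bigr).
$$
Because $\varphi\in N_K$, we have $\varphi(w)\le K\|w\|$ for all $w$, so
$$
|A^*\varphi(v)-\tilde A^*\varphi(v)| \;\le\; K\,\|A-\tilde A\|\cdot \|v\|.
$$
Restricting to $\|v\|\le 1$ and taking the supremum gives
$$
\dist'(A^*\varphi,\tilde A^*\varphi) \;\le\; K\,\|A-\tilde A\|.
$$

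Next, since $A^*\varphi$ and $\tilde A^*\varphi$ are both assumed to lie in $N_K$, I can apply the estimate \eqref{dist comp} from the previous lemma, which bounds $\dist$ by $K\cdot\dist'$ on $N_K$. This yields
$$
\dist(A^*\varphi,\tilde A^*\varphi) \;\le\; K\,\dist'(A^*\varphi,\tilde A^*\varphi) \;\le\; K^{2}\|A-\tilde A\|,
$$
which is comfortably within the claimed bound $K^4\|A-\tilde A\|$ (recall $K\ge 1$).

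There is essentially no obstacle here; the only care needed is bookkeeping of the two independent places where the $N_K$-hypothesis enters: one factor of $K$ to convert $\varphi(\cdot)$ into the background norm $\|\cdot\|$ when bounding the pointwise difference, and a second factor of $K$ to pass from the $\sup$-norm metric $\dist'$ to the geometric ball-ratio metric $\dist$. The $K^4$ in the statement is a safe overestimate that will be convenient when this lemma is combined with further pull-back estimates in the subsequent construction of the H\"older family of norms.
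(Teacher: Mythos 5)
Your proof is correct and follows essentially the same route as the paper: bound the pointwise difference $|\va(Av)-\va(\tilde Av)|$ via the reverse triangle inequality to control $\dist'$, then pass to $\dist$ using \eqref{dist comp}. Your direct estimate $\va((A-\tilde A)v)\le K\|A-\tilde A\|\,\|v\|$ even avoids the paper's intermediate use of the $\va$-operator norm and yields the sharper constant $K^2$, which of course implies the stated bound $K^4$ since $K\ge 1$.
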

 \begin{proof}
We denote  
$\|A\| _\va=\, \sup \,\{ \,\va(Av):\, \va(v)\le 1\}.$
It follows from \eqref{equiv}  that  
$$\|A\| _\va \le \sup\, \{ K\|Av\|:\, \|v\|\le K\} \le K^2 \|A\|.$$

For any $v$ with $\|v\|\le 1$ we have
$$
  |\va(Av)-\va(\tilde Av)|\le \va(Av-\tilde Av) \le \|A-\tilde A\|_\va \, \va(v)
   \le K^2\|A-\tilde A\|\cdot K \|v\|=K^3\|A-\tilde A\|.
$$
It follows that 
$$
 \dist' (A^*\va,\, \tilde A^*\va) =  \sup \,\{ \, |\,\va(Av)-\va (\tilde Av)\,| : \,\|v\| \le 1 \,\}\,
 \le K^3  \|A-\tilde A\|,
$$
and using \eqref{dist comp} we conclude that 
\vskip.1cm 
\hskip2cm $\dist (A^*\va,\, \tilde A^*\va) \le K \dist' (A^*\va,\, \tilde A^*\va) \le K^4  \|A-\tilde A\|. $
\end{proof}

We denote $\bar \A =\text{Cl} (\A_X)$. 
We fix $K$ such that $K > \|A\|, \|A^{-1}\|$ for all $A\in \bar \A$
and consider the corresponding space of norms $N_K$. 
Then $A^*\va_0 \in N_K$ for each $A\in \bar \A$ as 
$
K^{-1}\|v\| \le \| (A^*\va_0)(v) \|=\|Av\| \le K\|v\|.
$ 
Lemma \ref{pull} implies that the function 
$A \mapsto A^*\va_0$ is continuous in $A$, and since  the set $\bar \A$ is 
compact in $GL(V)$, its image under this function
$$
N_\A=\text{Cl}\,\{\, (\A_x^n)^*\va_0: \;x\in X, \;n\in \Z \,\}\subseteq N_{K} \;\text{ is compact}.
$$
We also note that $A^*\va \in N_{K^2}$  for any $A \in \bar \A$ and $\va \in N_K$.

Since all norms in $N_\A$ are equivalent, the intersection of the unit balls 
$\B_1, ..., \B_n$ for finitely
many of these norms $\va_1, ... , \va_n\in N_\A$ is the unit ball of an equivalent norm 
 $\hat \va=\max \{ \va_1, ... , \va_n \}$ in $N_K$. We consider the set $\hat N_\A$ of all 
 such norms $\hat \va$,
 $$
 \hat N_\A =\{ \,\hat \va=\max \{ \va_1, ... , \va_n \}:\; n\in \N, \; \va_1, ... , \va_n\in N_\A \,\} 
 \subseteq N_K.
  $$
  
  \begin{lemma} The set $\,\bar N_\A = \text{Cl} \,( \hat N_\A)$ is a compact subset of $N_K$.
  \end{lemma}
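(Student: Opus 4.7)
The plan is to use the completeness of $(N_K,\dist)$ established in the preceding lemma and reduce the problem to showing that $\hat N_\A$ is totally bounded in $(N_K,\dist)$. Since $\bar N_\A = \text{Cl}(\hat N_\A)$ is by definition closed in $N_K$, and a closed totally bounded subset of a complete metric space is compact, this reduction is enough. First I would note that every element of $\hat N_\A$ indeed lies in $N_K$: if $\va_1,\dots,\va_n\in N_\A\subseteq N_K$, then $K^{-1}\|v\|\le\va_i(v)\le K\|v\|$ for each $i$, and the same inequalities hold for $\hat\va = \max\{\va_1,\dots,\va_n\}$, which is easily checked to be a norm.

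The central technical observation is that for any $n$, the map $(\va_1,\dots,\va_n)\mapsto \max_i \va_i$ is non-expansive with respect to $\dist$. Indeed, by the characterization \eqref{dist2}, $\dist(\va_i,\psi_i)\le \varepsilon$ for each $i$ means $e^{-\varepsilon}\psi_i(v)\le \va_i(v) \le e^{\varepsilon}\psi_i(v)$ for every $v\in V$. Taking the pointwise maximum over $i$ preserves both inequalities, so
$$
  e^{-\varepsilon}\max_i\psi_i(v)\le \max_i\va_i(v)\le e^{\varepsilon}\max_i\psi_i(v),
$$
and hence $\dist(\max_i\va_i,\max_i\psi_i)\le \varepsilon$ by \eqref{dist2} again.

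With this in hand, fix $\varepsilon>0$. Since $N_\A$ is compact in $N_K$, choose a finite $\varepsilon$-net $\{\psi_1,\dots,\psi_m\}\subseteq N_\A$. For an arbitrary $\hat\va=\max\{\va_1,\dots,\va_n\}\in \hat N_\A$, pick for each $i$ an index $j_i\in\{1,\dots,m\}$ with $\dist(\va_i,\psi_{j_i})\le \varepsilon$. The non-expansion property then gives $\dist(\hat\va,\max_i\psi_{j_i})\le\varepsilon$. Since the max of norms is commutative and idempotent, $\max_i\psi_{j_i}$ depends only on the nonempty subset $S=\{j_1,\dots,\,j_n\}\subseteq\{1,\dots,m\}$. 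Therefore the finite collection
$$
\bigl\{\,\max_{j\in S}\psi_j:\;\emptyset\neq S\subseteq\{1,\dots,m\}\,\bigr\}\subseteq \hat N_\A
$$
is an $\varepsilon$-net for $\hat N_\A$, which is thus totally bounded.

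I do not expect any serious obstacle; the only thing to check carefully is the non-expansion of $\max$ in the metric $\dist$, which is immediate from the multiplicative characterization \eqref{dist2}, and the observation that the finite set $\{\max_{j\in S}\psi_j\}$ really is finite uses only that $\max$ is idempotent and symmetric, so arbitrarily long lists collapse to subsets of $\{\psi_1,\dots,\psi_m\}$.
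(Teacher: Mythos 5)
Your proof is correct and follows essentially the same route as the paper: reduce to total boundedness of $\hat N_\A$ (using completeness of $(N_K,\dist)$), take a finite $\e$-net in the compact set $N_\A$, and observe that maxima over subsets of the net form a finite $\e$-net for $\hat N_\A$ via the non-expansiveness of the pointwise maximum. The only cosmetic difference is that you verify this non-expansion inequality through the multiplicative characterization \eqref{dist2}, whereas the paper argues with intersections of unit balls via \eqref{dist}; the two verifications are equivalent.
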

  \begin{proof}
It suffices to show  that $\hat N_\A$ is  totally bounded. 
Let $P=\{\va_1, ... , \va_N\}$ be an $\e$-net in $N_\A$ and let
 $\hat P$ be the set of all possible maxima of subsets of $P$. Then
 $\hat P$ is an $\e$-net in $\hat N_\A$ since
 \begin{equation}\label{max}
    \dist (\max \{ \va_1, ... , \va_n \}, \max \{ \tilde \va_1, ... , \tilde \va_n \}) \le 
 \max \{ \dist(\va_1, \tilde \va_1), ... ,\dist(\va_n, \tilde \va_n) \}.
\end{equation}
 Indeed, if the right hand side equals $\log t$ then for the corresponding unit balls 
we have 
$\B_1\subset t \tilde \B_1$, ..., $\B_n \subset t \tilde \B_n$ and it follows that
$$
\B_1 \cap ... \cap \B_n \,\subseteq \,t \tilde \B_1 \cap ... \cap t \tilde \B_n 
=\, t (\tilde \B_1 \cap ... \cap  \tilde \B_n).
$$
Similarly, $\,\tilde \B_1 \cap ... \cap  \tilde \B_n \,\subseteq \, t (\B_1 \cap ... \cap \B_n )$,
and so the left hand side of \eqref{max} is at most $\log t$.
\end{proof}
\vskip.1cm

For each $x\in X$ we consider the  pullbacks of the background norm 
by $\A_x^n$ and let
$$
 \va^m_x= \max \,\{\,(\A_x^n)^*\va_0:\; |n|\le m\,\} 
 \quad \text{and} \quad \va_x = \sup \,\{\,(\A_x^n)^*\va_0:\; n \in \Z\,\} .
$$
We note that $\va_x$ is the norm whose unit ball is the intersection
of the unit balls of $\va_x^m$, $m\in \N$, or equivalently the unit balls 
of $(\A_x^n)^*\va_0$, $n\in \Z$. We claim that 
$$
\,\va_x=\lim_{m\to \infty} \va^m_x\; \text{ in }(\bar N_\A, \dist) \;\text{ for each }x\in X.
$$
Indeed,  for each $v\in V$ the sequence $\va_x^m(v)$ increases and converges to $\va_x(v)$.
Since the sequence $\va_x^m$ lies in the compact set $\bar N_\A$, any 
subsequence has a subsequence converging in $(\bar N_\A, \dist)$, whose limit must be $\va_x$.
This implies, by contradiction, that $\va_x^m$ converges to $\va_x$ in $(\bar N_\A, \dist)$.
Note that compactness of $\bar N_\A$ is crucial here.

Since $(\A_x^n)^*\va_0$ depends continuously on $x$ for each $n$, 
the inequality \eqref{max} yields
that $\va^m_x$ is a continuous function on $X$ for each $m$. We conclude that the pointwise
limit $\va_x$ is a Borel measurable function from $X$ to $N_\A \subseteq N_K$. 
By the construction, $\va_x= (\A_x^n)^* \va_{f^nx}$ for all $x\in X$ and $n\in \Z$.
 In other words, $\va_x$ is an invariant section of the bundle $\n=X\times \bar N_\A$ over $X$.


\subsection{Proof of (iv)} 
We keep the notations of  the previous section.
By Proposition \ref{measurable}, there exists a Borel measurable 
 family $\va_x$ of norms in $N_K$ invariant under the cocycle. 
Let $\mu$ be the Bowen-Margulis measure of maximal entropy for $(X,f)$.
We show that the family $\va_x$ coincides $\mu$ almost everywhere 
with a H\"older continuous invariant family of norms. 
First we consider $x\in X$ and $z\in W^s_{loc}(x)$.
We denote $x_n=f^nx$ and $z_n=f^n z$. Since the family of norms $\va_x$
is invariant  and the action of $GL(V)$ on norms is isometric, we have
$$
\begin{aligned}
  \dist(\va_x, \va_z) & = \dist \left( (\A_x^n)^* \va_{x_n},\,(\A_z^n)^* \va_{z_n} \right) \\
    &\le \dist \left( (\A_x^n)^* \va_{x_n},\,(\A_x^n)^* \va_{z_n} \right) +
     \dist \left( (\A_x^n)^* \va_{z_n},\,(\A_y^n)^* \va_{z_n} \right) \\
   &\le \dist (\va_{x_n},\, \va_{z_n}) +  
   \dist \left(  \va_{z_n},\,(\A_z^n \circ (\A_x^n)^{-1}  )^* \va_{z_n} \right). 
     \end{aligned}
$$
By Proposition \ref{close to Id} for all $n\in \N$ we have  
 $\|(\A^n_x)^{-1} \circ  \A^n_z - \Id\,\| \leq  c\,\dist (x,y)^{\beta}$ and hence 
 $$
    \|\A_z^n \circ (\A_x^n)^{-1} -\Id\| \le 
    \|A_z^n\|\cdot \|(\A^n_x)^{-1} \circ  \A^n_z - \Id\,\| \cdot  \|(A_z^n)^{-1}\| \le K^2 c\,\dist (x,z)^{\beta}
 $$
 since $\|\A_x^n\|,\, \|(\A_x^n)^{-1}\| \le K$ for all $x$ and $n$.
Then by Lemma \ref{pull} we have
$$
\dist \left(  \va_{z_n},\,(\A_z^n \circ (\A_x^n)^{-1} )^* \va_{z_n} \right)  \le
 K^8 \|\A_z^n \circ (\A_x^n)^{-1}  - \Id\,\| \leq K^{10} c\,\dist (x,z)^{\beta}
$$
as  $\va_{z_n} \in N_K$ and hence $(\A_z^n \circ (\A_x^n)^{-1} )^* \va_{z_n} \in N_{K^2}$.

Since the the space $(\bar N_\A, \dist)$ is compact and hence separable, 
we can apply Lusin's theorem to the function $\va: x\mapsto \va_x$ from $X$ 
to $\bar N_\A$. So there exists a compact set $S \subset X$ with $\mu(S)>1/2$
on which $\va$ is uniformly continuous. 
Let $Y$ be the set of points in $X$ for which the frequency of visiting $S$ 
equals $\mu(S)>1/2$. By Birkhoff ergodic theorem $\mu(Y)=1$.
If both $x$ and $z$ are in $Y$, then there exists a sequence $\{ n_i \}$ 
such that $x_{n_i}\in S$ and $z_{n_i}\in S$.
Since $z\in W^s_{loc}(x)$, 
$$
\dist(x_{n_i}, z_{n_i})\to 0 \;\text{ and hence }\;
\dist(\va_{x_{n_i}}, \va_{z_{n_i}})\to 0
$$
by uniform continuity of $\va$ on $S$. 
Thus we conclude that for $x,z\in Y$ with $z\in W^s_{loc}(x)$
 $$
   \dist(\va_x, \va_z)\leq  K^{10} c\,\dist (x,z)^{\beta} =: c_1\dist (x,z)^{\beta}.
 $$
Similarly, for $x, y\in Y$ with $y\in W^u_{loc}(x)$ we have 
$\dist(\va_x, \va_y)\leq  c_1 \dist (x,y)^{\beta}$.

We consider a small open set in $X$ with  product structure,
which for the shift case is just a cylinder with a fixed $0$-coordinate.
For almost every local stable leaf, the set of  points of 
$Y$ on the leaf has full conditional measure of $\mu$.  
We consider $x,y \in Y$ which lie on two such local stable leaves and
denote by $H_{x,y}$ the holonomy map along unstable leaves from
$W^s_{loc}(x)$ to $W^s_{loc}(y)$:
$$
  \text{for }z\in W^s_{loc}(x), \quad 
  H_{x,y}(z)= W^u_{loc}(z)\cap W^s_{loc}(y) \, \in W^s_{loc}(y).
$$
It is known that the holonomy maps are absolutely continuous
with respect to the conditional measures of $\mu$, which implies that
 there exists a point 
$z\in W^s_{loc}(x)\cap Y$ close to $x$ such that $z'=H_{x,y}(z)$ is also in $Y$.
By the argument above we have
  $$
   \dist (\va_x,\va_z)\leq  c_1 \dist(x,z)^\beta,  \,
    \dist(\va_z, \va_{z'}) \leq c_1\dist(z,z')^\beta, \,
   \dist(\va_{z'}, \va_y)\leq c_1\dist(z',y)^\beta.
  $$
Since the points $x$, $y$, and $z$ are close,
by the local product structure we have
 $$ 
   \dist(x,z)^\beta + \dist(z,z')^\beta +
    \dist(z',y)^\beta \leq c_2\,\dist(x,y)^\beta.
 $$  

Hence, we obtain $\dist(\va_x,\va_y)\leq  c_3 \,\dist(x,y)^\beta$ 
for  all $x$ and $y$ in a set of full measure $\tilde Y \subset Y$.
We can assume that $\tilde Y$ is invariant by taking
$\bigcap_{n=-\infty}^{\infty} f^n(\tilde Y)$. Since $\mu$ has
full support, the set $\tilde Y$ is dense in $X$, and hence we can extend $\va$ 
from $\tilde Y$ and obtain an invariant H\"older continuous 
family of norms $\,\|.\|_x $ on $X$. 
$\QED$


\end{document}